\tikzset{vertex/.style={circle,draw,fill,inner sep=0pt,minimum size=1mm}}
\theoremstyle{plain}
\newtheorem{thm}{Theorem}
\newtheorem{lem}[thm]{Lemma}
\theoremstyle{definition}
\newtheorem{definition}[thm]{Definition}
\newtheorem{exl}[thm]{Example}
\numberwithin{thm}{section}
\def\Z{{\mathbb Z}}
\def\R{{\mathbb R}}
\begin{document}
\title{Shy Maps in Topology}
\author{Laurence Boxer
         \thanks{
    Department of Computer and Information Sciences,
    Niagara University,
    Niagara University, NY 14109, USA;
    and Department of Computer Science and Engineering,
    State University of New York at Buffalo.
    E-mail: boxer@niagara.edu
    }
}
\date{ }
\maketitle

\begin{abstract}
There is a concept in digital topology of a {\em shy map}. We define an analogous concept for 
topological spaces: We say a function is shy if it is continuous and the inverse image of
every path-connected subset of its image is path-connected. Some basic
properties of such maps are presented. For example, every shy map onto
a semilocally simply connected space induces a surjection of fundamental groups (but a shy
map onto a space that is not semilocally simply connected need not do so).

Key words and phrases: digital topology, fundamental group, wedge
\end{abstract}

\section{Introduction}
Shy maps between digital images were introduced in~\cite{Bx05} and studied in subsequent papers belonging
to the field of digital topology,
including~\cite{Bx14,BxSt16,Bx-Shy,Bx-normal,Bx-alternate}. In this paper, we develop an analogous notion of a shy map between topological spaces and study its properties.

Recall 
that if
$F: X \to Y$ is a continuous function of topological spaces such that $F(x_0)=y_0$,
then $F$ induces a homomorphism of fundamental groups,
$F_*: \Pi_1(X,x_0) \to \Pi_1(Y,y_0)$, defined by
$F_*([f]) = [F \circ f]$ for every loop $f: (S^1,s_0) \to (X,x_0)$, where $S^1$ is
the unit circle in the Euclidean plane.

A topological space $X$ is {\em semilocally simply connected}~\cite{Brown}
if for every $x \in X$ there is
a neighborhood $N_x$ of $x$ in $X$ such that every loop in $N_x$ is nullhomotopic in $X$.

We let $\Z$ denote the set of integers, and $\R$, the real line.

A digital image is often considered as a graph $(X,\kappa)$, where $X \subset \Z^n$ for some
positive integer $n$ and
$\kappa$ is an adjacency relation on $X$. A function $f: (X, \kappa) \to (Y, \lambda)$
between digital images is {\em continuous} if for every $\kappa$-connected
subset $A$ of $X$, $f(A)$ is a $\lambda$-connected subset of $Y$~\cite{Rosenfeld87,Bx99}.
A continuous surjection $f: (X, \kappa) \to (Y, \lambda)$ between digital images is called {\em shy}~\cite{Bx05} if the following hold.
\begin{itemize}
\item For all $y \in Y$, $f^{-1}(y)$ is a $\kappa$-connected subset of $X$, and
\item for all pairs of $\lambda$-adjacent $y_0,y_1 \in Y$, $f^{-1}(\{y_0,y_1\})$ is a
      $\kappa$-connected subset of $X$.
\end{itemize}
It is shown in~\cite{BxSt16} that a continuous function between digital images
is shy if and only if for every
$\lambda$-connected subset $Y'$ of $Y$, $f^{-1}(Y')$ is a $\kappa$-connected subset of $X$.
Since connectedness for a graph is analogous to what 
topologists call path-connectedness, we use the following.

\begin{definition}
\label{shy-def}
Let $X$ and $Y$ be topological spaces and let $f: X \to Y$. Then
$f$ is {\em shy} if $f$ is continuous and for every path-connected $Y' \subset f(X)$, 
$f^{-1}(Y')$ is a path-connected subset of $X$. $\Box$
\end{definition}

Note a shy map in digital topology is defined to be a surjection~\cite{Bx05}, which we do not require here.
This makes the requirement $Y' \subset f(X)$ of Definition~\ref{shy-def} noteworthy. For example, the
embedding $f: [0,\pi] \to S^1$ given by $f(x) = (\cos x, \sin x)$ is shy according to Definition~\ref{shy-def}. It would not be shy were Definition~\ref{shy-def} written with the requirement
$Y' \subset f(X)$ replaced by the requirement $Y' \subset Y$, since, e.g., the arc $A$ of $S^1$ from $(-1,0)$
to $(1,0)$ containing $(0,-1)$ is path connected, but $f^{-1}(A) = \{0,\pi\}$ is not.

Some authors consider digital images as topological spaces rather than as graphs, applying
the {\em Khalimsky topology} to digital images (see, e.g.,~\cite{Herman,KhalimskyEtal,Kong03,KongKopperman, Kopperman, Melin}).
The Khalimsky topology on $\Z$ takes a basic neighborhood of an integer $z$ to be $\{z\}$ if $z$ is odd;
$\{z-1,z,z+1\}$ if $z$ is even. The quotient map $q: \R \to \Z$ given by
\[ q(x) = \left \{ \begin{array}{ll}
         x & \mbox{if } x \mbox{ is an even integer;} \\
         y & \mbox{if } y \mbox{ is the unique odd integer such that } |x - y| < 1,
\end{array}
\right .
\]
is easily seen to be a shy map. More generally, the quotient map $q^n$ that is the $n$-fold
product of $q$ as a map from $\R^n$ (with the Euclidean topology) to $\Z^n$ (with the
Cartesian product topology taken from the Khalimsky topology on $\Z$) is a shy map.
The shyness of this quotient map is among the reasons why $\R^n$ is a useful ``continuous analog"
of $\Z^n$ and $\Z^n$ is an interesting ``discrete analog" of $\R^n$.

We mention here that the term {\em path in} $X$ from $x_0$ to $x_1$ will be used in two senses, as
is common practice: It may mean a continuous function $f: [a, b] \to X$ such that $f(a) = x_0$
and $f(b) = x_1$, or may mean the image $f([a, b])$ of such a function.

\section{Induced surjection on fundamental group}
In~\cite{Bx05}, it was shown that a digital shy map induces a surjection of digital fundamental
groups. In this section, we derive an analogous result for shy maps into 
semilocally simply connected spaces.

In this section, we let $e: [0,1] \to S^1$ be defined by 
$e(t) = (\cos 2\pi t, \sin 2\pi t)$.

\begin{lem}
({\rm \cite{H&W}}, Exercise 4, p. 269)
\label{extensionLemma}
Let $f,g: [0,1] \to X$ be paths into the topological space $X$, such that $f(0)=g(0)$ and
$f(1)=g(1)$. Let $h: S^1 \to X$ be defined by
\[ h(e(t)) = \left \{ \begin{array}{ll}
   g(2t) & \mbox{if } 0 \le t \le 1/2; \\
   f(2-2t) & \mbox{if } 1/2 \le t \le 1.
\end{array} \right .
\]
Then $f \simeq g$ via a homotopy that holds the endpoints fixed if and only if there is a
continuous extension of $h$ to the interior of $S^1$. $\Box$
\end{lem}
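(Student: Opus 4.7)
The plan is to realize the closed unit disk as the quotient of the square $I^2 = [0,1]^2$ obtained by collapsing each of the two vertical edges $\{0\} \times I$ and $\{1\} \times I$ to a point, and then to observe that both sides of the biconditional correspond to continuous maps out of this common quotient.

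More concretely, let $\sim$ be the equivalence relation on $I^2$ that identifies all of $\{0\} \times I$ to one point and all of $\{1\} \times I$ to another, and let $q: I^2 \to I^2/\sim$ be the quotient map. The space $I^2/\sim$ is homeomorphic to the closed unit disk $D^2$, and I would fix a concrete homeomorphism $\phi: I^2/\sim \to D^2$ chosen so that
\[
\phi(q(s,1)) = e(s/2) \quad \text{and} \quad \phi(q(s,0)) = e(1 - s/2)
\]
for all $s \in I$. A direct verification using the definition of $h$ then shows that $h(\phi(q(s,1))) = g(s)$ and $h(\phi(q(s,0))) = f(s)$.

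For the forward direction, I would assume $F: I^2 \to X$ is a rel-endpoints homotopy with $F(s,0) = f(s)$, $F(s,1) = g(s)$, $F(0,t) = f(0) = g(0)$, and $F(1,t) = f(1) = g(1)$. Because $F$ is constant on each vertical edge, it factors through $q$ to yield a continuous $\bar F: I^2/\sim \to X$, and then $\bar F \circ \phi^{-1}: D^2 \to X$ is continuous; by the parametrization of $\phi$ on the two horizontal edges, its restriction to $S^1$ agrees with $h$, so it gives the desired extension of $h$ to the interior of $S^1$. For the converse, given an extension $H: D^2 \to X$ of $h$, I would set $F = H \circ \phi \circ q: I^2 \to X$. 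The identities above give $F(s,0) = f(s)$ and $F(s,1) = g(s)$, while constancy on the vertical edges is automatic from $q$, so $F$ is a rel-endpoint homotopy from $f$ to $g$.

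The main obstacle is purely bookkeeping: one must align the way $h$ traverses $S^1$ (running $g$ forward on $e([0,1/2])$ and $f$ backward on $e([1/2,1])$) with the natural boundary parametrization of $I^2/\sim$, so that the top edge really does carry $g$ and the bottom edge really does carry $f$ (rather than $\bar f$, or with swapped endpoints). Once the choice of $\phi$ above is verified, both implications reduce to the standard factorization of a map through a quotient together with the universal property of $q$.
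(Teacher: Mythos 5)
Your argument is correct. Note that the paper does not actually prove this lemma---it is quoted from Hilton and Wylie with only a citation---so your proposal supplies a proof where the source gives none. The route you take is the standard one, and the one piece you flag as ``bookkeeping,'' namely the existence of the homeomorphism $\phi$ with the prescribed boundary behavior, does go through: the map $\Phi(s,t) = (\cos \pi s,\, (2t-1)\sin \pi s)$ is a continuous surjection $I^2 \to D^2$ whose fibers are exactly the equivalence classes of $\sim$, it satisfies $\Phi(s,1) = e(s/2)$ and $\Phi(s,0) = e(1-s/2)$, and the induced continuous bijection $I^2/\!\sim\; \to D^2$ is a homeomorphism since the domain is compact and the target is Hausdorff. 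With that in hand, your two identities $h(\phi(q(s,1))) = g(s)$ and $h(\phi(q(s,0))) = f(s)$ check out directly from the definition of $h$ (for the second, $h(e(1-s/2)) = f(2 - 2(1-s/2)) = f(s)$ since $1/2 \le 1 - s/2 \le 1$), and both directions of the biconditional reduce, as you say, to the universal property of the quotient map. No gaps.
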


The following is an immediate consequence of Lemma~\ref{extensionLemma}.

\begin{lem}
\label{htpyHoldingEnds}
Let $\alpha$, $\beta: [0,1] \to Y$ be paths into a topological space $Y$ such
that $\alpha(0)=\beta(0)$ and $\alpha(1)=\beta(1)$. Let $\overline{\alpha}$ be the reverse
path of $\alpha$. If $\overline{\alpha} \cdot \beta$ is nullhomotopic in $Y$, then there
is a homotopy between $\alpha$ and $\beta$ that holds the endpoints fixed. $\Box$
\end{lem}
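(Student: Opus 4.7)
The plan is to apply Lemma~\ref{extensionLemma} with $f = \alpha$ and $g = \beta$. This yields a specific map $h : S^1 \to Y$, and the ``if'' direction of that lemma reduces the task to producing a continuous extension of $h$ over the closed unit disk in $\R^2$. First I would identify the loop traced by $h$: as $t$ ranges over $[0,1]$, the map $t \mapsto h(e(t))$ traverses $\beta$ on $[0,1/2]$ and then $\overline{\alpha}$ on $[1/2,1]$, so it is a reparametrization of the concatenated loop $\beta \cdot \overline{\alpha}$ based at $\alpha(0) = \beta(0)$.

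Next I would invoke the standard fact that a continuous map $S^1 \to Y$ extends continuously to $D^2$ if and only if the loop it represents is nullhomotopic in $Y$. So the job reduces to showing that $\beta \cdot \overline{\alpha}$ is nullhomotopic, given that $\overline{\alpha} \cdot \beta$ is. This is a routine conjugation step in $\Pi_1(Y,\alpha(0))$: writing $c$ for a constant loop at the appropriate point and using associativity up to homotopy together with $\beta \cdot \overline{\beta} \simeq c$, we have
\[
\beta \cdot \overline{\alpha}
 \simeq \beta \cdot (\overline{\alpha} \cdot \beta) \cdot \overline{\beta}
 \simeq \beta \cdot c \cdot \overline{\beta}
 \simeq \beta \cdot \overline{\beta}
 \simeq c.
\]
Once this is in hand, $h$ extends to the disk, and Lemma~\ref{extensionLemma} immediately supplies the desired homotopy from $\alpha$ to $\beta$ that holds the endpoints fixed.

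The only real subtlety, if it can be called one, is bookkeeping the order of concatenation dictated by Lemma~\ref{extensionLemma}: since $f = \alpha$ appears in the second half (run backwards) and $g = \beta$ in the first half, the loop on $S^1$ comes out as $\beta \cdot \overline{\alpha}$ rather than the hypothesised $\overline{\alpha} \cdot \beta$, so the conjugation calculation above is precisely what bridges the hypothesis to the conclusion. Everything else is a direct application of machinery already present in the paper.
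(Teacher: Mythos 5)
Your argument is correct and is precisely the fleshed-out version of what the paper intends: the paper states this lemma as an ``immediate consequence'' of Lemma~\ref{extensionLemma} with no written proof, and your route --- identifying $h$ with the loop $\beta\cdot\overline{\alpha}$, reducing extendability over the disk to nullhomotopy, and bridging from $\overline{\alpha}\cdot\beta$ to $\beta\cdot\overline{\alpha}$ by conjugation --- is exactly the missing justification. The bookkeeping of the concatenation order, which you flag as the one subtlety, is handled correctly.
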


The main result of this section is the following.

\begin{thm}
\label{inducedOnto}
Let $X$ and $Y$ be topological spaces
such that $Y$ is semilocally simply connected,
and let $F: X \to Y$ be a shy surjection, with $F(x_0)=y_0$. Then the induced homomorphism
$F_*: \Pi_1(X,x_0) \to \Pi_1(Y,y_0)$, defined by
$F_*([f])= [F \circ f]$, is onto.
\end{thm}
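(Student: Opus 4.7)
The plan is to take an arbitrary loop $\gamma : [0,1] \to Y$ based at $y_0$ and build a loop $\tilde\gamma : [0,1] \to X$ based at $x_0$ such that $F \circ \tilde\gamma$ is homotopic to $\gamma$ rel endpoints. Then $F_*[\tilde\gamma] = [\gamma]$, which establishes surjectivity.

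First I would use semilocal simple connectivity to choose, for each $y \in Y$, a neighborhood $N_y$ in which every loop is nullhomotopic in $Y$. Pulling these back through $\gamma$ and invoking compactness of $[0,1]$, I obtain a partition $0 = t_0 < t_1 < \cdots < t_n = 1$ such that each subarc $\gamma([t_{i-1}, t_i])$ lies inside some $N_{y_i}$. Next I would pick lift points $x_i \in F^{-1}(\gamma(t_i))$ for $0 < i < n$ (nonempty by surjectivity), and set $x_n = x_0$, which is legitimate because $\gamma(1) = y_0$.

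The shyness hypothesis is the crucial ingredient in the next step. Since $\gamma([t_{i-1}, t_i])$ is path-connected in $Y$, its preimage $F^{-1}(\gamma([t_{i-1}, t_i]))$ is path-connected in $X$ by Definition~\ref{shy-def}. This preimage contains both $x_{i-1}$ and $x_i$, so I can choose a path $\tilde\gamma_i : [t_{i-1}, t_i] \to X$ between them whose image lies inside $F^{-1}(\gamma([t_{i-1}, t_i]))$. Concatenating the $\tilde\gamma_i$ produces a loop $\tilde\gamma$ in $X$ based at $x_0$.

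Finally I would verify $F \circ \tilde\gamma \simeq \gamma$ rel endpoints by patching local homotopies. On each subinterval $[t_{i-1}, t_i]$, both $F \circ \tilde\gamma_i$ and $\gamma|_{[t_{i-1}, t_i]}$ are paths inside $N_{y_i}$ with common endpoints, so the loop $\overline{F \circ \tilde\gamma_i} \cdot \gamma|_{[t_{i-1}, t_i]}$ lies in $N_{y_i}$ and is therefore nullhomotopic in $Y$. Lemma~\ref{htpyHoldingEnds} then yields a homotopy holding endpoints fixed between $F \circ \tilde\gamma_i$ and $\gamma|_{[t_{i-1}, t_i]}$, and assembling these piecewise gives the desired global homotopy rel endpoints. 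The main subtlety lies in getting the lifts and the local homotopies to match up consistently at the partition points $t_i$, which is precisely what the choice of common endpoints $x_i$ ensures and what the endpoint-fixing feature of Lemma~\ref{htpyHoldingEnds} allows us to exploit when gluing the homotopies together.
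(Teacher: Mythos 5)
Your proposal is correct and follows essentially the same strategy as the paper's proof: subdivide the loop finely enough (via compactness and the semilocal simple connectivity neighborhoods) that each subarc's image lies in a set where loops are nullhomotopic, use shyness to lift each subarc through the path-connected preimage $F^{-1}(\gamma([t_{i-1},t_i]))$ with matching endpoints, and then compare $F\circ\tilde\gamma$ to $\gamma$ piecewise using Lemma~\ref{htpyHoldingEnds}. The only differences are presentational (parametrizing by $[0,1]$ rather than $S^1$, and fixing the partition before constructing the lift instead of constructing $g_n$ for all $n$ and then taking $n$ large).
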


\begin{proof}
Let $g: (S^1,s_0) \to (Y, y_0)$ be a pointed loop in $Y$.
It suffices to show there is a pointed loop $f: (S^1,s_0) \to (X,x_0)$
in $X$ such that $F \circ f$ is pointed homotopic to $g$.

For any integer $n \ge 2$, we construct a loop $g_n: S^1 \to Y$ as follows. Without loss
of generality, we assume the base point $s_0$ of $S^1$ is $e(0)$. Note that
$x_0 \in F^{-1}(g(s_0))$. For $i \in \{1,\ldots, n-1 \}$ let $s_i = e(i/n)$ and let
$x_i \in F^{-1}(g(s_i))$. For $i \in \{0,\ldots, n-1 \}$, 
let $A_i = e([\frac{i}{n}, \frac{i+1}{n}])$ (so $A_i$ is an arc of $S^1$ from $s_i$ to
$s_{(i+1) \mod n}$).

Since $F$ is shy, $F^{-1}(g(A_i))$ is path-connected. Moreover, 
$\{s_i, s_{(i+1) \mod n}\} \subset A_i$. Therefore, there exists a
continuous $f_i: A_i \to F^{-1}(g(A_i))$ such that $f_i(s_i)=x_i$ and
$f_i(s_{(i+1) \mod n})=x_{(i+1) \mod n}$. Let $f: S^1 \to X$ be the
loop such that $f|_{A_i}=f_i$ for all $i \in \{0,\ldots,n-1\}$. Let $g_n = F \circ f$.

Since $g(S^1)$ is compact and $Y$ is semilocally simply connected, there is a finite list 
$U_1, \ldots, U_k$ of open sets in $Y$ that cover $g(S^1)$ such that each loop into any $U_j$
is nullhomotopic in $Y$. On applying Lebesgue's covering lemma to the open cover
$\{g^{-1}(U_1), \ldots, g^{-1}(U_k)\}$ of $S^1$, we see that if $n$ is sufficiently large then
for each $i \in \{0, \ldots, n-1\}$ the set $g(A_i)=g(e([\frac{i}{n}, \frac{i+1}{n}]))$ is contained in one of the sets
$U_1,\ldots, U_k$.
Therefore, we can apply Lemma~\ref{htpyHoldingEnds} to conclude that
for sufficiently large $n$, $i \in \{0,\ldots,n-1\}$, and any homeomorphism
$h_i: [0,1] \to A_i$, the paths $g_n|_{A_i}$ and $g|_{A_i}$ are homotopic in $Y$ relative to $\{0,1\}$,
whence $g_n|_{A_i}$ and $g|_{A_i}$ are homotopic in
$Y$ relative to $\{h_i(0), h_i(1)\} = \{s_i, s_{(i+1) \mod n}\}$. It follows that, for sufficiently large $n$,
$g$ and $g_n= F \circ f$ are homotopic in $Y$ relative to $\{s_0\}$.
This establishes the assertion.
\end{proof}

Theorem~\ref{inducedOnto} may fail if $Y$ is not semilocally simply connected,
as shown by the following example.

\begin{exl}
Let $H$ be a Hawaiian earring 
in $\R^2$, let $a$ be the common point of the circles of $H$, and let $H'$ be the reflection of $H$ in the
line through $a$ that is tangent to the circles of $H$ (so that $H \cap H' = \{a\}$).
Let $Y = H \cup H'$, let
\[X=(H \times \{0\}) \cup (H' \times \{1\}) \cup (\{a\} \times [0, 1]) \subset \R^3, \]
and let $p: \R^3 \to \R^2$ be the
projection map $p(x, y, z) = (x, y)$. Then $p|_X: X \to Y$ is a shy surjection that
does not induce a surjection of fundamental groups: If $C_n$ and $C_n'$
respectively denote the $n^{th}$ largest circles of $H$ and $H'$, then there is no loop
$\ell: S^1 \to X$ such that $p|_X \circ \ell$ is homotopic
in $Y$ to a loop that winds around all the circles of $Y$ in the order
$C_1, C_1', C_2, C_2', C_3, C_3', \ldots$. $\Box$
\end{exl}

\section{Operations that preserve shyness}
It is shown in~\cite{Bx-Shy} that a composition of
shy surjections between digital images is shy. The following
gives an analogous result for shy maps between
topological spaces.

\begin{thm}
Let $f: X \to Y$ and $g: Y \to Z$ be shy maps between
topological spaces. Suppose $f$ is a surjection.  Then $g \circ f: X \to Z$ is shy.
\end{thm}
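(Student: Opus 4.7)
The plan is essentially to chase the definition of shyness twice, and the only nontrivial aspect will be to track where the surjectivity hypothesis on $f$ gets used. Continuity of $g \circ f$ is immediate, so the real work is to check that for every path-connected $Z' \subset (g \circ f)(X)$, the preimage $(g \circ f)^{-1}(Z')$ is path-connected in $X$.

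First I would fix an arbitrary path-connected $Z' \subset (g \circ f)(X)$ and set $Y' := g^{-1}(Z')$. Since $Z' \subset (g \circ f)(X) = g(f(X)) \subset g(Y)$, the shyness of $g$ applies to $Z'$ and yields that $Y' = g^{-1}(Z')$ is a path-connected subset of $Y$. Then I want to apply the shyness of $f$ to $Y'$, which requires $Y' \subset f(X)$; this is exactly where I would invoke the hypothesis that $f$ is surjective, since then $f(X) = Y$ and so $Y' \subset Y = f(X)$ holds automatically. Shyness of $f$ therefore gives that $f^{-1}(Y')$ is path-connected in $X$. Finally, the identity $(g \circ f)^{-1}(Z') = f^{-1}(g^{-1}(Z')) = f^{-1}(Y')$ closes the argument.

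There is no real obstacle here; the only subtlety worth flagging is the role of the surjectivity assumption on $f$. Without it one would only know $Y' \subset Y$, not $Y' \subset f(X)$, and the definition of shyness (as formulated in Definition~\ref{shy-def}, with its deliberate restriction to subsets of the image) could not be applied to $f$. The surjectivity of $g$ is not needed, and indeed the proof gives the slightly sharper statement that $g \circ f$ is shy whenever $f$ is a shy surjection and $g$ is shy.
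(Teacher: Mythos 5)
Your proof is correct and follows essentially the same route as the paper's: apply shyness of $g$ to the path-connected set $Z'$, then shyness of $f$ to $g^{-1}(Z')$, with surjectivity of $f$ guaranteeing $g^{-1}(Z') \subset f(X)$ so that Definition~\ref{shy-def} applies. Your explicit identification of exactly where the surjectivity hypothesis enters is a point the paper's proof leaves implicit, but the argument is the same.
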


\begin{proof}
It is clear that $g \circ f$ is continuous.
Let $A \subset g(Y)$ be path-connected. Since $g$ is shy,
$g^{-1}(A)$ is path-connected. Since $f$ is a shy surjection,
$f^{-1}(g^{-1}(A))$ $= (g \circ f)^{-1}(A)$ is path-connected.
Therefore, $g \circ f$ is shy.
\end{proof}

The following is suggested by analogous results for shy
maps between digital
images~\cite{Bx-Shy,Bx-normal,Bx-alternate}.

\begin{thm}
\label{factor}
Let $f_i: X_i \to Y_i$ be functions between topological
spaces, $1 \le i \le v$. Let
$f = \Pi_{i=1}^v f_i: \Pi_{i=1}^v X_i \to \Pi_{i=1}^v Y_i$
be the product function,
\[ f(x_1,\ldots,x_v) = (f_1(x_1), \ldots, f_v(x_v))
\mbox{ for } x_i \in X_i.
\]
If $f$ is shy, then each $f_i$ is shy.
\end{thm}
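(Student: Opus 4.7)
The plan is to show that for each index $j$, the map $f_j$ inherits both continuity and the shyness condition from $f$. The idea is to realize $X_j$ as a coordinate ``slice'' of the product obtained by fixing base points in the other factors, so that any path-connected subset $A \subset f_j(X_j)$ can be thickened to a path-connected subset $A' \subset \prod_i Y_i$ whose $f$-preimage factors as a product with $f_j^{-1}(A)$.

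Fix $j$ and, using that we may assume every $X_i$ is nonempty (otherwise the conclusion is vacuous), choose points $x_i^0 \in X_i$ for $i \neq j$ and set $y_i^0 = f_i(x_i^0)$. Let $\iota_j: X_j \to \prod_i X_i$ be the insertion $x_j \mapsto (x_1^0, \ldots, x_{j-1}^0, x_j, x_{j+1}^0, \ldots, x_v^0)$ and let $\pi_j: \prod_i Y_i \to Y_j$ be the coordinate projection. Both are continuous in the product topology, and $f_j = \pi_j \circ f \circ \iota_j$; hence continuity of $f$ yields continuity of $f_j$.

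Now let $A \subset f_j(X_j)$ be path-connected, and define
\[
A' \;=\; A \times \prod_{i \neq j} \{y_i^0\} \;\subset\; \prod_i Y_i.
\]
Then $A'$ is homeomorphic to $A$ and therefore path-connected, and $A' \subset f\bigl(\prod_i X_i\bigr)$ because each $y_i^0$ lies in $f_i(X_i)$ and each point of $A$ lies in $f_j(X_j)$. Shyness of $f$ gives that $f^{-1}(A')$ is path-connected. A direct check shows
\[
f^{-1}(A') \;=\; f_j^{-1}(A) \,\times\, \prod_{i \neq j} f_i^{-1}(y_i^0),
\]
and this product is nonempty (as $x_i^0 \in f_i^{-1}(y_i^0)$). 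A nonempty product is path-connected if and only if each factor is, so $f_j^{-1}(A)$ is path-connected, establishing shyness of $f_j$.

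I do not foresee any real obstacle; the only subtlety is the bookkeeping on the nonemptiness convention, which is needed to produce the base points $x_i^0$ used to assemble the slice $A'$. Everything else is a routine application of the universal property of the product topology together with the hypothesis on $f$.
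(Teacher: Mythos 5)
Your proof is correct and follows essentially the same route as the paper's: realize $f_j$ as $\pi_j \circ f \circ \iota_j$ for continuity, then apply shyness of $f$ to the slice $A \times \prod_{i\neq j}\{y_i^0\}$ and recover $f_j^{-1}(A)$ from the resulting product preimage. If anything, you are slightly more careful than the paper in choosing $y_i^0 = f_i(x_i^0)$ so that the slice demonstrably lies in $f\bigl(\prod_i X_i\bigr)$, as Definition~\ref{shy-def} requires; the paper instead finishes by projecting the product preimage via $p_i$, but both closing steps are routine.
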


\begin{proof}
We use the well known continuity of projection maps
$p_j: \Pi_{i=1}^v Y_i \to Y_i$, defined by
$p_j(y_1, \ldots, y_v) = y_j$.

For some $x_i \in X_i$, let $I_j: X_j \to X=\Pi_{i=1}^v X_i$ be the injection defined by
\[I_j(x) = 
(x_1, \ldots, x_{j-1}, x, x_{j+1}, \ldots, x_v).\]

Since $f$ is shy, $f$ must be continuous,
and it follows that each $f_i=p_i \circ f \circ I_i$ is continuous.

Let $A_i$ be a path-connected subset of $f_i(X_i)$. We must
show that $f_i^{-1}(A_i)$ is a path-connected subset of
$X_i$. Fix $y_j \in A_j$ for all indices $j$ and let 
$I_j': Y_j \to Y=\Pi_{i=1}^v Y_i$ be the injection defined by
\[I_j'(y) = 
(y_1, \ldots, y_{j-1}, y, y_{j+1}, \ldots, y_v).\]
Then $I_i'(A_i)$ is path-connected, since it is
homeomorphic to $A_i$. Therefore,
$f^{-1}(I_i'(A_i)) = \Pi_{j=1}^v f_j^{-1}(p_j^Y(I_i'(A_i)))$, where
$p_j^Y: Y \to Y_j$ is the projection to the $j^{th}$ coordinate, is
path-connected.

Then $f_i^{-1}(A_i) = p_i(\Pi_{j=1}^v f_j^{-1}(p_j^Y(I_i'(A_i))))$ is path-connected.
This completes the proof.
\end{proof}

The proofs in~\cite{Bx-Shy,Bx-normal,Bx-alternate} for
analogs of the converse of Theorem~\ref{factor} rely
on structure that digital images have but that
cannot be assumed for topological spaces. It
appears that obtaining either a proof or a counterexample
for the converse of Theorem~\ref{factor} is a difficult problem.

We say a topological space $W$ is the {\em wedge} of its
subsets $X$ and $Y$, denoted by $W = X \vee Y$, if $W = X \cup Y$ and
$X \cap Y = \{x_0\}$ for some $x_0 \in W$.
We say subsets $A,B$ of a topological space $X$ are {\em separated in} $X$ if neither $A$ nor
$B$ intersects the closure in $X$ of the other. We have the following, which has an
elementary proof that is left to the reader.

\begin{lem}
\label{baseptSeparates}
Suppose $W = A \vee B$, with $A \setminus B$ and $B \setminus A$ separated in $W$. Let $C$ be
a path-connected subset of $W$ such that $C \cap A \ne \emptyset \ne C \cap B$. Then the
unique point of $A \cap B$ is in $C$. Further, each of $C \cap A$ and $C \cap B$ is
path-connected. $\Box$
\end{lem}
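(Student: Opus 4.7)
The plan is to first unpack the separation hypothesis into the topological facts that $A$ and $B$ are closed in $W$ and that $A\setminus\{x_0\}$ and $B\setminus\{x_0\}$ are open in $W$, where $\{x_0\}=A\cap B$, and then handle each claim in turn. For the topological setup: since $\overline{A\setminus\{x_0\}}$ misses $B\setminus\{x_0\}$ by the separation hypothesis, and $W=A\cup B$, the closure lies in $A$, so $A=\overline{A\setminus\{x_0\}}\cup\{x_0\}$ is closed in $W$. Symmetrically $B$ is closed, and therefore $A\setminus\{x_0\}=W\setminus B$ and $B\setminus\{x_0\}=W\setminus A$ are open in $W$.

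For the first assertion, suppose for contradiction that $x_0\notin C$. Then $C\subseteq(A\setminus\{x_0\})\cup(B\setminus\{x_0\})$, and the two disjoint open sets $A\setminus\{x_0\}$ and $B\setminus\{x_0\}$ induce a separation of $C$ into two nonempty pieces, contradicting the connectedness of $C$ (which follows from its path-connectedness).

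For the second assertion, take any $p,q\in C\cap A$ and any path $\gamma\colon [0,1]\to C$ from $p$ to $q$. I would define a new map $\gamma'\colon[0,1]\to W$ by setting $\gamma'(t)=\gamma(t)$ when $\gamma(t)\in A$ and $\gamma'(t)=x_0$ otherwise. Its image lies in $C\cap A$ because $x_0\in C$ by the first part. Continuity is the key check: at $t_0$ with $\gamma(t_0)\in A\setminus\{x_0\}$ or $\gamma(t_0)\in B\setminus\{x_0\}$, the openness of the appropriate piece gives a neighborhood of $t_0$ on which $\gamma'$ coincides with $\gamma$ or with the constant $x_0$ respectively; at $t_0$ with $\gamma(t_0)=x_0$, every neighborhood $V$ of $x_0$ in $W$ satisfies $\gamma'(\gamma^{-1}(V))\subseteq V$, since $x_0\in V$ covers the $B$-branch and $\gamma(t)\in V$ covers the $A$-branch. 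The path-connectedness of $C\cap B$ follows by a symmetric argument.

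The main (modest) obstacle is exactly this continuity check at preimages of the wedge point, where the two branches of the piecewise definition meet; the hypothesis that $A\setminus\{x_0\}$ and $B\setminus\{x_0\}$ are separated is precisely what legitimizes collapsing the excursions of $\gamma$ into $B\setminus A$ to the single point $x_0$, without which the construction of $\gamma'$ would fail to be continuous.
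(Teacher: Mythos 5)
The paper itself gives no proof of this lemma (it is ``left to the reader''), so I can only judge your argument on its own terms. Your overall strategy --- derive $x_0\in C$ from connectedness, then retract a path in $C$ onto $C\cap A$ by collapsing its excursions into $B$ to the wedge point --- is the right one and surely the intended one. But the topological setup you build everything on is not correct in the stated generality. From the separation hypothesis you do get $\overline{A\setminus\{x_0\}}\subseteq A$ and $\overline{B\setminus\{x_0\}}\subseteq B$; however, the step ``$A=\overline{A\setminus\{x_0\}}\cup\{x_0\}$ is closed'' is unjustified, because in a non-$T_1$ space $\overline{\{x_0\}}$ may contain points of $B\setminus A$ (and symmetrically for $B$). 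Concretely, take $W=\{a,x_0,b\}$ with open sets $\emptyset$, $\{b\}$, $\{a,x_0\}$, $W$, and $A=\{a,x_0\}$, $B=\{x_0,b\}$: the sets $\{a\}=A\setminus B$ and $\{b\}=B\setminus A$ are separated, yet $B$ is not closed and $A\setminus\{x_0\}=\{a\}$ is not open. Since both halves of your proof invoke the openness of $A\setminus\{x_0\}$ and $B\setminus\{x_0\}$ in $W$ --- the first to produce a separation of $C$, the second to get neighborhoods on which $\gamma'$ agrees with $\gamma$ or with the constant $x_0$ --- this is a genuine gap as written.

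The repair is local and uses the same hypothesis. For the first claim, you do not need open sets of $W$: if $x_0\notin C$ then $C$ is the union of the nonempty sets $C\cap(A\setminus B)$ and $C\cap(B\setminus A)$, which are separated (subsets of separated sets are separated), and a nonempty union of two nonempty separated sets is disconnected; this contradicts path-connectedness of $C$. For the continuity of $\gamma'$, replace ``$A\setminus\{x_0\}$ is open'' by the observation that $U_A:=W\setminus\overline{B\setminus A}$ is open and satisfies $A\setminus\{x_0\}\subseteq U_A\subseteq A$ (the first inclusion by separation, the second because $W=A\cup B$). At a parameter $t_0$ with $\gamma(t_0)\in A\setminus\{x_0\}$, the neighborhood $\gamma^{-1}(U_A)$ of $t_0$ is one on which $\gamma$ stays in $A$, hence $\gamma'=\gamma$ there; symmetrically $U_B:=W\setminus\overline{A\setminus B}$ handles points mapped into $B\setminus\{x_0\}$, and your check at preimages of $x_0$ is fine as it stands. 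With these substitutions the proof is complete and correct.
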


We will use the following ``Gluing Rule.''

\begin{thm}
\label{gluing}
{\rm \cite{Brown}}
Let $f: X \to Y$ be a function between topological spaces. Suppose $X = A \cup B$ where
$A \setminus B \subset Int A$, $B \setminus A \subset Int B$. If $f|_A$ and $f|_B$ are
continuous, then $f$ is continuous. $\Box$
\end{thm}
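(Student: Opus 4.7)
\emph{Proof proposal.} The plan is to show that $f^{-1}(U)$ is open in $X$ for every open $U \subset Y$. Fix such a $U$ and an arbitrary point $x \in f^{-1}(U)$; the goal is to exhibit an open neighborhood of $x$ in $X$ contained in $f^{-1}(U)$.

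First I would apply the continuity hypotheses. Since $f|_A$ is continuous and $A$ carries the subspace topology from $X$, the set $(f|_A)^{-1}(U)$ is open in $A$, so there exists an open $V_A \subset X$ with $V_A \cap A = (f|_A)^{-1}(U)$. By the same reasoning there is an open $V_B \subset X$ with $V_B \cap B = (f|_B)^{-1}(U)$.

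Next I would split into cases according to the location of $x$. If $x \in A \cap B$, then $x \in V_A \cap V_B$, and this is the desired neighborhood: any $y \in V_A \cap V_B$ lies in $A$ or in $B$ (since $X = A \cup B$), and in either case the corresponding intersection $V_A \cap A$ or $V_B \cap B$ forces $f(y) \in U$. If $x \in A \setminus B$, then by hypothesis $x \in Int\,A$; I would take $V = V_A \cap Int\,A$, an open neighborhood of $x$ lying entirely in $A$, so that every $y \in V$ lies in $V_A \cap A \subset f^{-1}(U)$. The case $x \in B \setminus A$ is symmetric.

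The one point worth pausing over is the role of the interior hypotheses. At a point of $A \cap B$ both restrictions of $f$ are locally available, so nothing more than continuity of $f|_A$ and $f|_B$ is needed. At a point of $A \setminus B$, however, $f|_B$ gives no information near $x$, so one must be able to restrict attention to a neighborhood contained wholly in $A$; the inclusion $A \setminus B \subset Int\,A$ is precisely what makes this possible, and symmetrically for $B \setminus A \subset Int\,B$. The classical pasting lemmas (both $A,B$ closed, or both open) are recovered as special cases, since each such configuration implies the stated interior inclusions.
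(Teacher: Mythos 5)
Your proof is correct and complete; the case analysis at points of $A \cap B$, $A \setminus B$, and $B \setminus A$, with the interior hypotheses used exactly where $f|_B$ (resp.\ $f|_A$) gives no local information, is precisely the standard argument. The paper itself offers no proof --- it cites \cite{Brown} and marks the statement with $\Box$ --- and your argument is essentially the one found there, so there is nothing to reconcile.
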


If $W = X \vee Y$ with $X \cap Y = \{x_0\}$, $W' = X' \vee Y'$ with $X' \cap Y' = \{x_0'\}$, and
$f: (X,x_0) \to (X', x_0')$ and
$g: (Y,x_0) \to (Y', x_0')$ are pointed functions, let
$f \vee g: W \to W'$ be the function defined by
\[ (f \vee g)(a) = \left \{ \begin{array}{ll}
    f(a) & \mbox{if } a \in X; \\
    g(a) & \mbox{if } a \in Y.
\end{array} \right .
\]
The following is suggested by an analogous result for
digital images~\cite{Bx-Shy}.

\begin{thm}
Let $W = X \vee Y$, $W' = X' \vee Y'$,
$f: (X, x_0) \to (X', x_0')$, $g: (Y,x_0) \to (Y', x_0')$, and $h = f \vee g: W \to W'$,
where $x_0$ and $x_0'$ are the unique points of of $X \cap Y$ and $X' \cap Y'$, respectively.
Assume $X \setminus Y$ and $Y \setminus X$ are separated in $W$, and
$X' \setminus Y'$ and $Y' \setminus X'$ are separated in $W'$
Then $h$ is shy if and only if $f$ and $g$ are both shy.
\end{thm}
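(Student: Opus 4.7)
The plan is to handle the two implications separately, using Lemma~\ref{baseptSeparates} in both directions and the Gluing Rule (Theorem~\ref{gluing}) for continuity of $h$.

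For the forward direction, I would first note that $f = h|_X$ and $g = h|_Y$ are compositions of $h$ with the (continuous) inclusions $X \hookrightarrow W$ and $Y \hookrightarrow W$ (followed by restriction of codomain), so both are continuous. To see $f$ is shy, let $A' \subset f(X)$ be path-connected. Since $f(X) \subset X' \subset W'$, $A'$ is also a path-connected subset of $h(W)$, so $h^{-1}(A')$ is path-connected in $W$ by shyness of $h$. I would then observe $h^{-1}(A') \cap X = f^{-1}(A')$ and $h^{-1}(A') \cap Y = g^{-1}(A')$. If $x_0' \notin A'$, then $A' \subset X' \setminus \{x_0'\} \subset X' \setminus Y'$, so $g^{-1}(A') = \emptyset$ (as $g(Y) \subset Y'$), and $h^{-1}(A') = f^{-1}(A')$ is path-connected. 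If $x_0' \in A'$, then $h^{-1}(A')$ meets both $X$ and $Y$ (the point $x_0$ is in each), so Lemma~\ref{baseptSeparates} applied to $W = X \vee Y$ with $C = h^{-1}(A')$ yields that $h^{-1}(A') \cap X = f^{-1}(A')$ is path-connected. The argument for $g$ is symmetric.

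For the reverse direction, continuity of $h$ follows from the Gluing Rule: the separation hypothesis implies $X \setminus Y \subset W \setminus \overline{Y \setminus X} \subset X$, so the open set $W \setminus \overline{Y \setminus X}$ witnesses $X \setminus Y \subset \mathrm{Int}\, X$, and similarly for $Y \setminus X \subset \mathrm{Int}\, Y$; since $f = h|_X$ and $g = h|_Y$ are continuous, so is $h$. For shyness, let $A' \subset h(W) = f(X) \cup g(Y)$ be path-connected. I would split into three exhaustive cases: (i) $A' \subset X'$; (ii) $A' \subset Y'$; (iii) $A'$ meets both $X' \setminus Y'$ and $Y' \setminus X'$. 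In case (i), since $g(Y) \cap X' \subset \{x_0'\} \subset f(X)$, one has $A' \subset f(X)$, and $h^{-1}(A') = f^{-1}(A') \cup g^{-1}(A' \cap \{x_0'\})$; both pieces are path-connected (the second by shyness of $g$ applied to the path-connected set $\{x_0'\} \subset g(Y)$, if it arises), and both contain $x_0$ when $x_0' \in A'$, so their union is path-connected. Case (ii) is symmetric.

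In case (iii), I would apply Lemma~\ref{baseptSeparates} to $W' = X' \vee Y'$ with $C = A'$, concluding $x_0' \in A'$ and that $A_1 := A' \cap X'$, $A_2 := A' \cap Y'$ are path-connected subsets of $f(X)$ and $g(Y)$ respectively. Cases (i) and (ii) applied to $A_1$ and $A_2$ give that $h^{-1}(A_1)$ and $h^{-1}(A_2)$ are path-connected; each contains $x_0$ since $h(x_0) = x_0' \in A_1 \cap A_2$, so their union $h^{-1}(A')$ is path-connected. The main subtlety throughout is bookkeeping at the wedge point: one must verify that the base point is forced into any path-connected preimage that straddles the two halves (forward direction) and that the two ``halves'' of $h^{-1}(A')$ overlap at $x_0$ so they can be glued (reverse direction), both of which are exactly what Lemma~\ref{baseptSeparates} delivers.
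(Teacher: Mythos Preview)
Your proposal is correct and follows essentially the same approach as the paper: both directions rest on Lemma~\ref{baseptSeparates} for the path-connectedness bookkeeping at the wedge point and on the Gluing Rule for continuity, and both decompose $h^{-1}(A')$ as $f^{-1}(A'\cap X')\cup g^{-1}(A'\cap Y')$. Your treatment is slightly more explicit than the paper's (you separate out the cases $x_0'\in A'$ versus $x_0'\notin A'$ in the forward direction, and split the reverse direction into three cases), whereas the paper folds these together with the clause ``$x_0$ lies in both sets if both sets are nonempty''; but the underlying argument is the same.
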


\begin{proof}
As $f = h|_X$ and $g = h|_Y$, and the sets $X \setminus Y$ and $Y \setminus X$ are separated,
it follows from Theorem~\ref{gluing} that
$h = f \vee g$ is continuous if and only if each of $f$ and $g$ is continuous.

To see that $f$ is shy if $h$ is shy we note that, if $P'$ is a path-connected subset of $f(X)$,
then $h^{-1}(P')$ is path-connected and so $h^{-1}(P') \cap X = f^{-1}(P')$ is path-connected by
Lemma~\ref{baseptSeparates}. Similarly, $g$ is shy if $h$ is shy.

Suppose each of $f$ and $g$ is shy. To prove $h$  is shy, let $Q'$ be
any path-connected subspace of $h(X \vee Y)$. We want to show $h^{-1}(Q')$ is path-connected.
We see from
Lemma~\ref{baseptSeparates} that if $Q' \cap X' \ne \emptyset \ne Q' \cap Y'$, then
$x_0' \in Q'$ and, moreover, $Q' \cap X'$ and $Q' \cap Y'$ are path-connected subsets of $f(X)$
and $g(Y)$, respectively. This and the shyness of $f$ and $g$ imply that each of the sets
$f^{-1}(Q' \cap X')$ and $g^{-1}(Q' \cap Y')$ is path-connected, and that $x_0$ lies in both sets if both sets are nonempty. So the union of the two sets is
also path-connected. As the union is $h^{-1}(Q')$, the proof is complete.
\end{proof}

\section{Shy maps into $\R$}
In this section, we show that shy maps into the
reals have special properties. Our results are suggested by 
analogs for digital images~\cite{Bx-Shy}.

\begin{thm}
Let $X$ be a connected subset of $\R$ and
let $f: X \to \R$ be continuous. Then
$f$ is shy if and only if $f$ is monotone non-decreasing or monotone non-increasing.
\end{thm}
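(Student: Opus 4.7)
The plan is to prove the biconditional by handling each direction separately, using the fact that since $X$ is a connected subset of $\R$, $X$ is an interval, and that path-connected subsets of $\R$ are exactly intervals.

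For the direction ``monotone implies shy,'' I would assume without loss of generality that $f$ is non-decreasing. Let $Y' \subset f(X)$ be path-connected, hence an interval in $\R$. Given $x_1, x_2 \in f^{-1}(Y')$ with $x_1 < x_2$ and any $x \in [x_1,x_2]$, we have $x \in X$ because $X$ is an interval, and monotonicity gives $f(x) \in [f(x_1),f(x_2)] \subset Y'$, so $x \in f^{-1}(Y')$. Thus $f^{-1}(Y')$ is an interval and therefore path-connected.

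For the converse, I would argue by contrapositive. Assuming $f$ is not monotone, the first step is to extract three points $x_1 < x_2 < x_3$ in $X$ such that $f(x_2)$ is either strictly greater than both $f(x_1),f(x_3)$ or strictly less than both; this rests on the standard fact that a continuous function on an interval is monotone if and only if for every triple $p<q<r$, $f(q)$ lies between $f(p)$ and $f(r)$. Say $f(x_2) > \max\{f(x_1),f(x_3)\}$, and choose $\beta$ strictly between this maximum and $f(x_2)$. By the intermediate value theorem applied separately on $[x_1,x_2]$ and on $[x_2,x_3]$, there exist $u \in (x_1,x_2)$ and $v \in (x_2,x_3)$ with $f(u)=f(v)=\beta$. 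Since $f(x_2) > \beta$, the point $x_2$ is not in $f^{-1}(\{\beta\})$, so this preimage contains $u$ and $v$ with $u < x_2 < v$ but misses $x_2$, and thus fails to be an interval. Hence $f^{-1}(\{\beta\})$ is not path-connected, contradicting shyness applied to the singleton path-connected set $\{\beta\} \subset f(X)$.

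The main obstacle is the first step of the contrapositive: extracting a strict ``peak'' or ``valley'' triple from a bare failure of monotonicity. This requires casework on the relative positions of witnesses $a<b$ with $f(a)>f(b)$ and $c<d$ with $f(c)<f(d)$, patching them together via continuity to produce the desired triple. Every other step reduces to the intermediate value theorem and the fact that connected subsets of $\R$ are intervals.
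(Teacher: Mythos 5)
Your proposal is correct and follows essentially the same route as the paper: monotonicity together with the fact that connected subsets of $\R$ are intervals gives shyness, while a failure of monotonicity yields a strict peak or valley triple $x_1<x_2<x_3$ whose intermediate level set has a preimage containing points on both sides of $x_2$ but not $x_2$ itself, hence not path-connected. The peak/valley extraction that you flag as the main obstacle is exactly the step the paper also asserts without proof, so there is no substantive difference between the two arguments.
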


\begin{proof}
Suppose $f$ is shy. If $f$ is not monotone, then
there exist $a,b,c \in X$ such that $a < b < c$ and either
\begin{itemize}
\item $f(a) < f(b)$ and $f(b) > f(c)$, or
\item $f(a) > f(b)$ and $f(b) < f(c)$.
\end{itemize}
In the former case, the continuity of $f$
implies there exist $a' \in [a,b)$ and $c' \in (b,c]$ such that
$\{a',c'\} \subset f^{-1}(\{\max\{f(a),f(c)\}\})$ but
$b \not \in f^{-1}(\{\max\{f(a),f(c)\}\})$. Therefore,
$f^{-1}(\{\max\{f(a),f(c)\}\})$ is not path-connected,
contrary to the assumption that $f$ is shy. The
latter case generates a contradiction similarly.
We conclude that $f$ is monotone.

Suppose $f$ is a monotone function. Let $A$ be a
path-connected subset of $f(X)$. Let $c,d \in f^{-1}(A)$.
Without loss of generality, $c \le d$ and $f(c) \le f(d)$.
Since $f$ is continuous and $X$ is connected, for every
$x$ such that $c \le x \le d$ we have 
$f(c) \le f(x) \le f(d)$.
Therefore, $f^{-1}(A)$ contains
$[c, d]$, a path from $c$ to $d$.
Thus, $f$ is shy.
\end{proof}

\begin{thm}
Let $f: S^1 \to \R$ be shy.
Then $f$ is a constant function.
\end{thm}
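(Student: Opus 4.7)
The plan is to argue by contradiction: assume $f$ is non-constant and exploit the fact that removing two points from $S^1$ disconnects it. Since $S^1$ is compact and connected and $f$ is continuous, $f(S^1)$ is a compact connected subset of $\R$, hence a closed interval $[m,M]$, and non-constancy gives $m<M$. I would then pick any points $p\in f^{-1}(m)$ and $q\in f^{-1}(M)$, noting $p\ne q$, and write $S^1\setminus\{p,q\}=U_1\cup U_2$ as the disjoint union of the two open arcs determined by $p$ and $q$.

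Next I would select a value $y$ strictly between $m$ and $M$. Applying the intermediate value theorem along each of the two closed arcs from $p$ to $q$ (on which $f$ runs continuously from $m$ to $M$), I get a point of $f^{-1}(y)$ in each $U_i$. On the other hand, $\{y\}$ is path-connected and contained in $f(S^1)$, so by the shyness of $f$, the set $f^{-1}(y)$ is a path-connected, hence connected, subset of $S^1$.

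The contradiction then follows from the observation that $U_1$ and $U_2$ are separated in $S^1$, since each is open and their closures differ only by adding the points $p$ and $q$, neither of which lies in the other arc. A connected subset of $S^1$ meeting both $U_1$ and $U_2$ must therefore contain one of $p$ or $q$; but $f(p)=m\ne y$ and $f(q)=M\ne y$, so $f^{-1}(y)$ meets $\{p,q\}$ is empty. This contradicts connectedness of $f^{-1}(y)$, forcing $f$ to be constant.

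The only step that requires any care is the separation claim, and it is genuinely elementary for arcs of the circle, so I do not expect a real obstacle here; the proof should be short and self-contained, without needing any of the earlier lemmas beyond the definition of shyness.
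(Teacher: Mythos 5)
Your argument is correct and is essentially the paper's own proof: both pick preimages of two distinct values, split $S^1$ into the two arcs they determine, use the intermediate value theorem on each arc to place a point of $f^{-1}(y)$ (for an intermediate value $y$) in each open arc, and conclude that $f^{-1}(y)$ is disconnected, contradicting shyness. The only blemish is the garbled phrase ``$f^{-1}(y)$ meets $\{p,q\}$ is empty,'' which should read that $f^{-1}(y)$ is disjoint from $\{p,q\}$.
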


\begin{proof}
Suppose the image of $f$ has distinct points $a,b$, where $a < b$. Then
there exist $a' \in f^{-1}(a)$, $b' \in f^{-1}(b)$.
There are two distinct arcs $A_0$ and $A_1$ in $S^1$ from
$a'$ to $b'$ such that $S^1=A_0 \cup A_1$ and
$A_0 \cap A_1 = \{a',b'\}$. Since $f$ is continuous,
there exist $c_0 \in A_0$, $c_1 \in A_1$ such that
$f(c_0)=f(c_1) = (a+b)/2$. Then
$f^{-1}(\{(a+b)/2\})$ is disconnected, contrary to the
assumption that $f$ is shy. Therefore, $f$ is a constant function.
\end{proof}

\begin{thm}
Let $X$ be a path-connected topological space for which
there exists $r \in X$ such that $X \setminus \{r\}$ is
not path-connected. Let $f: X \to \R$ be a shy map. Then there are at most 2 path
components of $X \setminus \{r\}$ on which $f$ is
not identically equal to the constant function with value $f(r)$.
\end{thm}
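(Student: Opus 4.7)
Set $c = f(r)$ and suppose toward contradiction that three path components $C_1, C_2, C_3$ of $X \setminus \{r\}$ each have $f$ not identically equal to $c$. My approach is to pigeonhole the three components into ``signs'' relative to $c$ and then choose a path-connected $Y' \subset f(X)$ whose preimage both avoids $r$ and meets two different $C_i$, contradicting shyness.

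First I would classify each $C_i$: call it \emph{positive} (resp.\ \emph{negative}) if $f$ attains some value $> c$ (resp.\ $< c$) on it. Since $f$ is not identically $c$ on $C_i$, the component $C_i$ must be positive or negative (possibly both). By pigeonhole, two of the three components share a sign; without loss of generality $C_1, C_2$ are both positive, and I pick $v_i > c$ with $v_i \in f(C_i)$ for $i = 1, 2$.

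The core step is to apply shyness to $Y' := (c, \max\{v_1, v_2\}]$. Because $X$ is path-connected, $f(X)$ is a connected subset of $\R$, hence an interval; it contains $c$ and both $v_i$, so it contains $[c, \max\{v_1, v_2\}]$ and therefore $Y'$. The set $Y'$ is itself an interval, hence path-connected, so shyness gives that $f^{-1}(Y')$ is path-connected in $X$. But $c \notin Y'$ forces $r \notin f^{-1}(Y')$, so $f^{-1}(Y')$ is a path-connected subset of $X \setminus \{r\}$ and hence lies in a single path component of $X \setminus \{r\}$. Since $f^{-1}(Y')$ meets both $C_1$ and $C_2$ (via preimages of $v_1$ and $v_2$), this contradicts $C_1 \neq C_2$.

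This rules out two positive components; symmetrically, using the interval $[\min\{v_1, v_2\}, c)$, it rules out two negative components. Since every non-constant component is positive or negative, at most $1 + 1 = 2$ non-constant components exist, as desired. The main subtlety I expect is picking $Y'$: it must be path-connected, lie in $f(X)$, avoid the value $c$, and still hit the image of two different components. The half-open interval $(c, \max\{v_1, v_2\}]$ is the natural candidate because it exploits the fact that $f(X)$ is a connected interval in $\R$ to supply a ``detour'' linking preimages in $C_1$ and $C_2$ without ever passing through $c$.
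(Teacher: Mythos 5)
Your proof is correct, and it takes a genuinely different route from the paper's. The paper argues in two stages: it first shows that for any two distinct non-constant components $A,B$ with sample points $a,b$, the values $f(a)$ and $f(b)$ must straddle $f(r)$ --- by taking a path $P$ from $a$ to $r$ inside $A \cup \{r\}$, using the intermediate value theorem to find $x_0 \in P$ with $f(x_0)=f(b)$, and observing that the singleton preimage $f^{-1}(\{f(b)\})$ then meets both $A$ and $B$ but omits $r$, so it cannot be path-connected; a third component would have to agree in sign with one of the first two, repeating the contradiction. You collapse this into one step: pigeonhole two of the three components onto the same side of $c = f(r)$ and apply shyness directly to the half-open interval $(c, \max\{v_1,v_2\}]$, whose preimage avoids $r$ yet meets both components. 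Your version eliminates the intermediate-value detour inside $A \cup \{r\}$ (and the existence of the path $P$, which the paper only asserts is ``easy to see''); the price is the obligation to verify $Y' \subset f(X)$, which Definition~\ref{shy-def} requires and which you correctly discharge by noting that $f(X)$ is an interval because $X$ is path-connected --- the one place where your argument, like the paper's, genuinely uses the path-connectedness of $X$. Both proofs ultimately rest on the same observation, that a path-connected subset of $f(X)$ avoiding $f(r)$ has preimage contained in a single path component of $X \setminus \{r\}$; yours applies it to an interval rather than a point, which makes the bookkeeping shorter.
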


\begin{proof}
Suppose there exist 2 distinct path components $A$, $B$ of
$X \setminus \{r\}$ on which $f$ is not the constant
function with value $f(r)$. Then there exist
$a \in A$, $b \in B$ such that $f(a) \ne f(r) \ne f(b)$.

First, we show that either $f(a) < f(r) < f(b)$ or
$f(a) > f(r) > f(b)$. Suppose otherwise.
\begin{itemize}
\item Suppose $f(a) < f(r)$ and $f(b) < f(r)$. 
      Without loss of generality, $f(a) \le f(b) < f(r)$.
      Since $X$ is pathwise connected and $A$ is a {\em maximal} path-connected subset of
      $X \setminus \{r\}$,
      it is easy to see that there exists a
      path $P$ in $A \cup \{r\}$ from $a$ to $r$.
      By continuity of $f$, it is easy to see that there exists $x_0 \in P$
      such that $f(x_0)=f(b)$. But then
      $x_0$ and $b$ are in distinct path-components of
      $f^{-1}(\{f(b)\})$, contrary to the assumption that
      $f$ is shy.
\item If $f(a) > f(r)$ and $f(b) > f(r)$, then we      
      similarly obtain a contradiction.
\end{itemize}
We conclude that either $f(a) < f(r) < f(b)$ or
$f(a) > f(r) > f(b)$.

So if there is a 3rd path-component $C$ of $X \setminus \{r\}$ on
which $f$ is not identically equal to $f(r)$,
then there exists $c \in C$ such that either
\begin{itemize}
\item $f(c) < f(r)$, in which case we get a contradiction
      as in the first case above, since both
      $f(c)$ and $\min\{f(a),f(b)\}$ are less than 
      $f(r)$; or
\item $f(c) > f(r)$, in which case both
      $f(c)$ and $\max\{f(a),f(b)\}$ are greater than 
      $f(r)$, so we have a contradiction as in the second
      case above.
\end{itemize}
Therefore, there cannot be a 3rd path-component $C$ of 
$X \setminus \{r\}$ on which $f$ is not identically equal
to $f(r)$.
\end{proof}

\section{Concluding remarks}
Drawing on the notion of a shy map between digital
images~\cite{Bx05},
we have introduced an analogous notion of a shy map
between topological spaces. We have shown that shy maps
between topological spaces have many properties analogous to 
those of shy maps between digital images.

\section{Acknowledgment}
The corrections and excellent suggestions of the anonymous reviewer are gratefully acknowledged.


\begin{thebibliography}{11}

\bibitem{Bx99}
L. Boxer, A classical construction for the digital fundamental group,
{\em Journal of Mathematical Imaging and Vision} 10 (1999), 51-62.

\bibitem{Bx05}
L. Boxer, Properties of Digital Homotopy,
{\em Journal of Mathematical Imaging and Vision} 22 (2005), 19-26.

\bibitem{Bx14}
L. Boxer, Remarks on Digitally Continuous Multivalued Functions,
{\em Journal of Advances in Mathematics} 9 (1) (2014), 1755-1762.

\bibitem{Bx-Shy}
L. Boxer, Digital Shy Maps,
{\em Applied General Topology} 18 (1) (2017), 143-152.

\bibitem{Bx-normal}
L. Boxer, Generalized Normal Product Adjacency in Digital Topology, {\em Applied General Topology} 18 (2) (2017), 401-427.

\bibitem{Bx-alternate}
L. Boxer, Alternate Product Adjacencies in Digital Topology, {\em Applied General Topology} 19 (1) (2018), 21-53.

\bibitem{BxSt16}
L. Boxer and P.C. Staecker, Connectivity Preserving Multivalued Functions in Digital Topology,
{\em Journal of Mathematical Imaging and Vision} 55 (3) (2016), 370-377. DOI 10.1007/s10851-015-0625-5

\bibitem{Brown}
R. Brown, {\em Topology}, Ellis Horwood Limited, Chichester, 1988.

\bibitem{Herman}
G. T. Herman, {\em Geometry of Digital Spaces}, Birkh{\"a}user, Boston, 1998.

\bibitem{H&W}
P.J. Hilton and S. Wylie, {\em Homology Theory}, Cambridge University Press, London, 3rd printing,
1965.

\bibitem{KhalimskyEtal}
E. Khalimsky, R. Kopperman, and P.R. Meyer,
Computer graphics and connected topologies on finite ordered sets,
{\em Topology and its Applications} 36 (1990), 1-17.

\bibitem{Kong03}
T.Y. Kong,
The Khalimsky topologies are precisely those simply connected topologies on $\Z^n$
whose connected sets include all $2n$-connected sets but no $(3^n - 1)$-disconnected sets,
{\em Theoretical Computer Science} 305 (1–3), 2003, 221-235.

\bibitem{KongKopperman}
T.Y. Kong and R. Kopperman,
``Digital Topology'', in {\em Encyclopedia of General Topology},
K.P. Hart, J. Nagata, and J.E. Vaughan, eds.,
Elsevier, Amsterdam, 2004.

\bibitem{Kopperman}
R. D. Kopperman, The Khalimsky line as a foundation for digital topology,
in {\em Shape in Picture: Mathematical Description of Shape in Grey-level Images}, Y.-L. O et al. (eds.), NATO ASI
Series F, Vol. 126, Springer, Berlin, 1994, pp. 3-20.

\bibitem{Melin}
E. Melin, Digital Khalimsky manifolds, {\em J. Math. Imaging Vision} 33 (2009), 267-280.

\bibitem{Rosenfeld87}
A. Rosenfeld, `Continuous' functions on digital images,
{\em Pattern Recognition Letters} 4 (1987), 177-184.

\end{thebibliography}
\end{document}